\documentclass[reqno]{amsart}
\vfuzz2pt 
\hfuzz2pt 
\usepackage{mathrsfs,amsmath}
\usepackage{bm}
\usepackage{pdfsync}


\newcommand{\dom}{\operatorname{dom}}
\newcommand{\myRe}{\operatorname{Re}}
\newcommand{\myIm}{\operatorname{Im}}

\newcommand{\bC}{\mathbb{C}}
\newcommand{\bN}{\mathbb{N}}
\newcommand{\bR}{\mathbb{R}}

\newcommand{\bpsi}{\bm{\psi}}

\newcommand\ri{\mathrm{i}}
\newcommand\re{\mathrm{e}}

\newcommand\rd{\mathrm{d}}

\newcommand\by{\mathbf{y}}
\newcommand\bw{\mathbf{w}}
\newcommand\bz{\mathbf{z}}


\newtheorem{theorem}{Theorem}
\newtheorem{lemma}{Lemma}
\newtheorem{proposition}{Proposition}
\newtheorem{definition}{Definition}
\title[Inverse scattering for energy-dependent Schr\"{o}dinger equations]{Inverse scattering on the half-line\\ for energy-dependent Schr\"{o}dinger equations}
\author[Hryniv]{Rostyslav O. Hryniv}
\address[Hryniv]{Ukrainian Catholic University, Lviv, Ukraine \and University of Rzesz\'ow, Poland}
\email{rhryniv@ucu.edu.ua, rhryniv@ur.edu.pl}

\author[Manko]{Stepan S.~Manko}
\address[Manko]{Doppler Institute for Mathematical Physics and Applied
Mathematics, Czech Technical University in Prague,
B\v{r}ehov\'{a} 7, 11519 Prague, Czech Republic and  Department of Physics, Faculty of Nuclear Science and Physical Engineering, Czech Technical University in Prague,
Pohrani\v{c}n\'{\i} 1288/1,  40501 D\v{e}\v{c}\'{\i}n,
Czech Republic\footnote{On leave of absence from Pidstryhach Institute for Applied Problems of Mechanics and Mathematics, 3b Naukova st., 79060 Lviv, Ukraine}}
\email{stepan.manko@gmail.com}

\subjclass[2010]{Primary: 34L40; Secondary: 34L25, 47E05, 81U40} 

\keywords{Schr\"odinger equation,  inverse scattering, energy-dependent potentials}

\begin{document}
\begin{abstract}
In this paper, we study the inverse scattering problem for energy-dependent Schr\"{o}dinger equations on the half-line with energy-dependent boundary conditions at the origin.
Under certain positivity and very mild regularity assumptions, we transform this scattering problem to the one for non-canonical Dirac systems and show that, in turn, the latter can be placed within the known scattering theory for ZS-AKNS systems. This allows us to give a complete description of the corresponding scattering functions $S$ for the class of problems under consideration and justify an algorithm of reconstructing the problem from~$S$.
\end{abstract}


\maketitle
\section{Introduction}
The main aim of the paper is to develop the direct and inverse scattering theory for a class of one-dimensional energy-dependent Schr\"{o}dinger equations
\begin{equation}\label{eq:EDSE}
    -y''+qy+2kpy=k^2 y
\end{equation}
on the half-line with integrable potentials~$p$, highly singular potentials~$q$, and some boundary conditions at the origin.
Schr\"odinger equations of this form arise in various models of quantum and classical mechanics; for instance, the Klein--Gordon equation \cite{Jon93,Naj83} modeling interactions between colliding relativistic spinless particles (called anti-particles) with zero mass is a special case of \eqref{eq:EDSE} when $q=-p^2$. Further models leading to~\eqref{eq:EDSE} are mentioned at the end of this section. In quantum mechanical models, $k^2$ is related to the energy of the system, so that the formal potential~$q+ 2kp$ of the Schr\"odinger expression on the left-hand side of~\eqref{eq:EDSE} becomes energy dependent, thus giving the name to the equation.

Throughout the paper, we assume $p$ is real-valued and belongs to the space $X:=L^1(\bR_+)\cap L^2(\bR_+)$ while $q$ is a real-valued distribution belonging to the space $H^{-1}_{\mathrm{loc}}(\bR_+)$ and having the form $q=u'+u^2$ for some $u\in X$.
In other words, $q$ is a so-called Miura potential; see~\cite{KapPerShuTop05} for thorough discussion of Schr\"odinger operators with such potentials and Subsection~\ref{ssec:spectral} for justification of this assumption. We note that such $q$ may  contain e.g.\ the Dirac $\delta$-functions and local singularities of the Coulomb $1/x$-type that are widely used in quantum mechanics to model various interactions between or within the elementary particles.

Denote by $y^{[1]}:=y'-uy$ the so-called \emph{quasi-derivative} of a function~$y$; as suggested in~\cite{SavShk:1999,SavShk:2003}, the differential expression $-y'' + qy$ should then be interpreted as $-(y^{[1]})' - u y^{[1]}$ for the corresponding set of functions. We shall consider the general boundary conditions of the form
\begin{equation}\label{eq:bc}
    \sin\alpha\, y^{[1]}(0,k) + k \cos\alpha\, y(0,k)=0;
\end{equation}
note that for $\alpha=0$ this becomes the Dirichlet boundary condition and for $\alpha=\pi/2$ an analogue of the Neumann boundary conditions.

As in the standard case with regular~$q$ and $p\equiv0$, for real $k$ equation~\eqref{eq:EDSE} has the so-called Jost solution $f(\cdot,k)$ uniquely determined by the asymptotics $f(x,k)=
e^{ikx}(1+o(1))$ as $x\to\infty$.
Moreover, for nonzero real~$k$ the solutions $f(\cdot,k)$ and~$\overline{f(\cdot,k)}$ are linearly independent, and there is a number~$S(k)\in\mathbb{C}$ such that the solution $f(\cdot,k) + S(k) \overline{f(\cdot,k)}$ satisfies the boundary condition~\eqref{eq:bc}.
The function~$S$ is called the \emph{scattering function} of the problem~\eqref{eq:EDSE}, \eqref{eq:bc} and is a direct analogue of the scattering function for the classical Schr\"odinger scattering problem.

Note that the Jost solutions exist also for $k$ in the open upper half-plane~$\mathbb{C}_+$, and the \emph{Jost function}
\begin{equation}\label{eq:Jost-funct}
    s(k):=\sin\alpha\, f^{[1]}(0,k) + k \cos\alpha\, f(0,k)
\end{equation}
is analytic in $\mathbb{C}_+$ and continuous in the closure~$\overline{\mathbb{C}_+}$. A short computation shows that the scattering function~$S$ can be expressed in terms of the Jost function~$s$ as
\begin{equation*}
    S(k) = \frac{s(k)}{\overline{s(k)}}
        = \frac{\sin\alpha\, f^{[1]}(0,k) + k \cos\alpha\, f(0,k)}%
            {\sin\alpha\, \overline{f^{[1]}(0,k)} + k \cos\alpha\, \overline{f(0,k)}}.
\end{equation*}
In general, the Jost function~$s$ may have zeros in~$\mathbb{C}_+$, but in contrast with the classical Schr\"odinger scattering problem they need not belong to the imaginary axis. If $z_0\in\mathbb{C}_+$ is a zero of~$s$, then $f(\cdot,z_0)$ is a solution of~\eqref{eq:EDSE} belonging to the Hilbert space~$L^2(\bR_+)$ and satisfying the boundary condition~\eqref{eq:bc}, and in that case $z_0$ is called an \emph{eigenvalue}, or \emph{bound state} of the problem~\eqref{eq:EDSE}, \eqref{eq:bc}. The eigenvalues always come in complex conjugate pairs and, moreover, may have multiplicity larger than one. The scattering function~$S$ and the eigenvalues together  with the corresponding norming constants form the scattering data of the problem~\eqref{eq:EDSE}, \eqref{eq:bc}. We note, however, that under the standing assumptions on~$q$, the Jost function~$s$ is zero-free in~$\mathbb{C}_+$ and thus there are no eigenvalues (see Subsection~\ref{ssec:spectral}); as a result, the scattering data is just the scattering function~$S$.

The direct scattering problem consists in finding the scattering data for given potentials $p$, $q$, and the constant~$\alpha$ in the boundary conditions. The inverse scattering problem is to construct $p$, $q$, and $\alpha$ given the scattering data. A complete solution to the scattering problem for a given class of problems~\eqref{eq:EDSE}, \eqref{eq:bc} includes several subtasks, such as to characterise the corresponding set of scattering data, establish the reconstruction algorithm, prove uniqueness and continuous dependence.
In the classical setting with $p\equiv0$, $q$ of Faddeev--Marchenko class, and $\alpha=0$, a complete solution to the scattering problem has been known since the 1950-ies and is well explained in e.g. \cite{ChaSab89,Lev87,Mar86,New89}. We review below some related work on the inverse scattering in the energy-dependent case but observe here that satisfactory results have only been derived only when there are no eigenvalues, or bound states of~\eqref{eq:EDSE}--\eqref{eq:bc}.

Here, our aim is to give a complete solution of the inverse scattering problem for energy-dependent Schr\"odinger equation~\eqref{eq:EDSE} subject to the boundary conditions~\eqref{eq:bc} and under the weakest possible regularity assumptions on the potentials $p$ and~$q$. Under these assumptions, the problem possesses no bound states; we give a complete description of the scattering functions for the problem under consideration, prove existence and uniqueness of solution to the inverse scattering problem, and suggest a reconstruction algorithm. The main results of the paper are summarized in Theorem~\ref{thm:main} below. Our approach consists in transforming the scattering problem for energy-dependent Schr\"odinger equation~\eqref{eq:EDSE} to the one for non-canonical Dirac systems, and then placing the latter within the well developed scattering theory for ZS-AKNS systems. The method is motivated by a similar approach to problems on finite intervals realized in~\cite{HryPro12} and \cite{Pro13}. 

In the context of the scattering theory, the energy-dependent Schr\"{o}dinger equations \eqref{eq:EDSE} were considered first by Cornill~\cite{Cor70} and Weiss and Scharf~\cite{WeiSch71}, where the authors derived the Marchenko equation for~\eqref{eq:EDSE} and conjectured the applicability of the classical Gelfand--Levitan--Marchenko inverse scattering method developed in the works of Gelfand and Levitan \cite{GelLev51}, Krein \cite{Kre53,Kre55},
and Marchenko \cite{Mar52,Mar55}; see also the reviews \cite{DeiTru79,Fad59} and the books \cite{ChaSab89,Lev87,Mar86,New89}.

Jaulent and Jean \cite{Jau72,JauJea72} were seemingly the first to develop systematically the inverse scattering theory for Schr\"{o}dinger equations of the form \eqref{eq:EDSE}.
In the radial case, when there are no bound states,
the authors established a procedure of recovering the potentials $p$ and $q$ from the scattering data, under the assumption that $p$ and $q$ are real-valued and integrable along with their derivatives.
In the later publications \cite{JauJea76,JauJea76i} they also extended they approach to solve the inverse problem associated with the equation~\eqref{eq:EDSE} on the whole line.
Namely, they derived the Marchenko equation coupled with
a nonlinear ordinary differential equation for $\int_x^\infty p(t)\,\mathrm{d}t$.
Also, Jaulent used the generalization of that method to treat the case of purely imaginary $p$ in his short paper \cite{Jau76} containing no complete proofs.

Schr\"{o}dinger equations~\eqref{eq:EDSE} with $p$~being imaginary as well as real were studied by Sattinger and Szmigielski in \cite{SatSzi95,SatSzi96}.
The authors proved the invertibility of the scattering transform by means of a simple vanishing lemma when there are no bound states.
If the potential $p$ is real with zero mean, their method is simpler than the one of Jaulent and Jean, as it permits recovering $p$ by solving an algebraic equation rather than a differential equation.
Although in generic situation the scattering transform may not be invertible for one or more bound states, in \cite{SatSzi95} the authors solved the inverse problem for a single bound state and obtained a so-called breather solution.
In \cite{SatSzi95,SatSzi96} the inverse scattering problem is formulated as a Riemann--Hilbert problem on a Riemann surface.
As an application the authors proved a global existence theorem for a family of so-called isospectral flows.
The one-soliton solution of this nonlinear equation is similar in structure to the well-known breather solution of the sine-Gordon equation.
Isospectral flows for energy-dependent operators arise in various applications; for instance, an energy-dependent isospectral operator generates the Camassa--Holm equation that describes a special shallow water wave and is related to the Dym hierarchy \cite{CamHol93}.

Adapting an algorithm suggested originally by Deift and Trubowitz
\cite{DeiTru79}, Kami\-mura~\cite{Kam1,Kam3,Kam4} generalized the results of Jaulent and Jean to a wider class of potentials.
He studied the inverse problem on the half-line \cite{Kam4} as well as on the whole line \cite{Kam3} and also applies in~\cite{Kam2} the inverse scattering theory for \eqref{eq:EDSE} with $q=0$ to reconstruct an oceanic flow from the data of an observable property in the ocean.

Aktosun et al.~\cite{AktKlavdM98i,AktKlavdM98ii} described the wave propagation in a nonconservative medium by equation \eqref{eq:EDSE} with a real $q$ and a purely imaginary $p$.
In their model $\mathrm{Im}\,p$ represents the energy absorption
or generation, while $q$ represents the restoring force density.
The authors solved one-dimensional inverse scattering problem for equation \eqref{eq:EDSE} with bound states for integrable $p$ and $q$ from the Faddeev--Marchenko class under some additional restrictions on $p$ and $q$.
Observe that the direct and inverse scattering problems for equation \eqref{eq:EDSE} with real $p$ and with purely imaginary~$p$ are essentially different. For instance, in the latter case the differential operators arising in the left-hand side of~\eqref{eq:EDSE} are not self-adjoint even for real~$k$  and the corresponding scattering matrices are not unitary.

Kaup \cite{Kau75} treated the direct and inverse scattering problem for Schr\"{o}dinger equation \eqref{eq:EDSE} with the coefficients $p$ and $q$ tending to non-zero limits at infinity.
Such an equation may be reformulated as equation \eqref{eq:EDSE} with potentials of the Faddeev--Marchenko class but with $k^2$ replaced by $k^2+m$ and is connected with a long-wave approximation of Boussinesq-type equations.
The case when $m$ equals $1$ was studied in \cite{Tsu81}, where the author proved that the problem may be reduced to matrix analogue of
\eqref{eq:EDSE} without the mass parameter $m$.
Similar inverse scattering problem was treated in \cite{vdMPiv2001}.

Inverse scattering for energy-dependent Schr\"{o}dinger equations has also appeared in the literature in the context of the waves propagation in nondispersive media where the wave speed depends on position.
Such waves (e.g., sound, electromagnetic, or elastic ones)
may be described by an equation with a potential $k^2q$ that is proportional to the energy \cite{AktvdM90,AktvdM91}.
Inverse scattering problem for the Schr\"{o}dinger
equations with polynomial energy-dependent potentials were discussed in \cite{Nab06,NabGus06}.

The paper is organized as follows. In the next section, we explain how the scattering problem for the energy-dependent Schr\"{o}dinger equations~\eqref{eq:EDSE} canbe transformed to that for related non-canonical ZS-AKNS systems. Using this correspondence, In Section~\ref{sec:direct}, we characterize the set of scattering functions for the energy-dependent Schr\"{o}dinger equations under consideration, and then in Section~\ref{sec:inverse} we completely solve the inverse scattering problem and provide a reconstruction algorithm.

\section{Preliminaries}\label{sec:prelim}

\subsection{Miura potentials and their Riccati representatives}

We shall consider the case where the potential $q$ admits a Riccati representation given by the
Miura map \cite{Miu68}.
Recall that the {\it Miura map} is the nonlinear mapping
\begin{align*}
     M :L^2_{\mathrm{loc}}(\mathbb{R}) &\to H^{-1}_\mathrm{loc}(\mathbb{R}),\\
            u &\mapsto u'+ u^2.
\end{align*}
The Miura map has played a fundamental role in the study of existence and well-posedness questions for KdV and mKdV equations, since it relates smooth solutions of these equations, see \cite{Miu68}.

The range of the Miura map may be characterized as follows.
For $q\in H^{-1}_\mathrm{loc}(\mathbb{R})$
real-valued and $\varphi\in C_0^\infty(\mathbb{R})$, consider the Schr\"{o}dinger form
\[
    \mathfrak{q}(\varphi) := \int
        |\varphi'(x)|^2\,\rd x + \langle q,|\varphi|^2\rangle,
\]
where $\langle\cdot,\cdot\rangle$ is the pairing between
$H^{-1}_\mathrm{loc}(\mathbb{R})$ and $H^1_\mathrm{comp}(\mathbb{R})$.
In \cite{KapPerShuTop05} it was shown that
if $q$ is any real-valued distribution in $H^{-1}_\mathrm{loc}(\mathbb{R})$ for which the Schr\"{o}dinger form~$\mathfrak{q}$ is nonnegative, then $q$ may be presented as $q=Mu$ for a function $u\in L^2_\mathrm{loc}(\mathbb{R})$
that need not be unique.
We will call such a potential $q$ a Miura potential, and we will call any function $u$ with $q=Mu$ a {\it Riccati representative} for $q$.
It is not difficult to see that two Riccati representatives for a given $q$ differ by a continuous function.
Moreover, any Riccati representative $u$ is the logarithmic derivative of a positive distributional solution to the zero-energy Schr\"{o}dinger equation for $q$, and the differential expression $\ell(y):=-y''+q$ can be written as 
\begin{equation}\label{eq:diff-eq-fact}
 \ell(y) = -y'' + qy = -(y^{[1]})' - u y^{[1]} = -\Bigl(\frac{d}{d x}+u\Bigr)\Bigl(\frac{d}{d x}-u\Bigr),
\end{equation}
with $y^{[1]} = y' - uy$ being the quasi-derivative of~$y$.

In this paper, we will study inverse scattering for the subset of the Miura potentials which admit a real-valued Riccati representative $u\in X(=L^1(\mathbb{R}_+)\cap L^2(\mathbb{R}_+))$.
In this case, such a Riccati representative is unique, so we may parameterize the potentials by their Riccati representatives from~$X$. 


\subsection{Spectral properties of the problem~\eqref{eq:EDSE}--\eqref{eq:bc}}\label{ssec:spectral}
Since equation~\eqref{eq:EDSE} depends both on the spectral parameter~$k$ and its square~$k^2$, it can naturally be placed within the theory of quadratic operator pencils~\cite{Mar:1988}. Recall that under our assumptions, $q$ in~\eqref{eq:EDSE} is a Miura potential, i.e., that $q = u'+u^2$ for some Miura representative~$u \in X$. Denote by $I$ the identity operator in $L^2(\bR_+)$, by $A_z$, $z\in\bC$, the differential operator in $L^2(\bR_+)$ given by~\eqref{eq:diff-eq-fact} on the domain
\begin{align*}
	 \dom(A_z) = \Bigl\{ y \in AC_{\mathrm{loc}}(\bR_+) \cap L^2(\bR_+) \mid & y^{[1]} \in AC_{\mathrm{loc}}(\bR_+), \ell(y) \in L^2(\bR_+), \\
	 		  & \sin\alpha\, y^{[1]}(0) + z \cos\alpha\, y(0)=0\Bigr\}
\end{align*}
and by $B$ the operator of multiplication by the function~$p$.
For every  $z\in\bC$, set 
\[
	T(z):= A_z + zB -z^2I;
\]
$T$ is a quadratic operator pencil related to the problem~\eqref{eq:EDSE}--\eqref{eq:bc}. It is an operator-valued function of $z\in\bC$; for every $z$, $T(z)$ is a well-defined operator on the domain $\dom(T(z)) = \dom(A_z)$.

We call a number $z_0\in \bC$ \emph{resolvent point} of the operator pencil~$T$ if the operator $T(z_0)$ is boundedly invertible in~$L^2(\bR_+)$, i.e., if $0$ is in the resolvent set of the operator~$T(z_0)$. Next, $z_0\in \bC$ is an \emph{eigenvalue} of $T$ if $0$ is an eigenvalue of the operator~$T(z_0)$; in that case, every non-trivial function~$y\in\dom(T)$ satisfying $T(z_0)y=0$ is called a corresponding \emph{eigenfunction}. Similarly, $z_0\in \bC$ is a point of the \emph{essential spectrum} of $T$ if $0$ is in the essential spectrum of the operator~$T(z_0)$. 

Under the assumptions on the functions $u$ and $p$, for every $z\in\bC$ the operator $T(z)$ is a relatively compact perturbation of the operator $A_z - z^2I$~\cite{Wei87}; as a result, we conclude that the essential spectrum of the operator pencil~$T$ coincides with the whole real line. Next, eigenvalues come in complex conjugate pairs: indeed, if $z$ is an eigenvalue of~$T$ with corresponding eigenfunction~$y$, then $\overline{z}$ is an eigenvalue of~$T$ with an eigenfunction~$\overline{y}$. 

We observe that if $q$ is a Miura potential, then $T$ has no non-real eigenvalues. Indeed, integration by parts on account of~\eqref{eq:diff-eq-fact} shows that, for every $z\in\bC$ and every $y \in \dom(A_z)$, we have 
\[
	(A_z y,y) = \|y^{[1]}\|^2 + y^{[1]}(0)\overline{y}(0) = \|y^{[1]}\|^2 - z \cot\alpha |y(0)|^2 
\]
if $\sin\alpha\ne0$ or $(A_z y,y) = \|y^{[1]}\|^2$ otherwise. Assuming $z = \sigma + i\tau$ with $\tau\ne0$ is an eigenvalue of $T$ with an eigenfunction~$y$ and separating the real and imaginary parts in the scalar product $(T(z)y,y)$, we conclude that
\[
	(By,y) -\cot\alpha|y(0)|^2 = 2\sigma \|y\|^2 
\]
and
\[	
	\|y^{[1]}\|^2  + \sigma (By,y) - \sigma\cot\alpha|y(0)|^2  + (\tau^2-\sigma^2) \|y\|^2 =0
\]
in the case $\sin\alpha \ne 0$; otherwise, the terms with $\cot\alpha$ should be omitted. Both cases lead to the contradiction $\|y^{[1]}\|^2 + |z|^2(y,y)=0$, thus justifying absence of non-real eigenvalues.

Conversely, if the operator pencil~$T$ has no non-real eigenvalues, then under some mild conditions~\cite{Mar:1988} we have $A_0>0$ and then $q$ must be a Miura potential. Therefore, under the assumption that the energy-dependent Schr\"{o}dinger equation~\eqref{eq:EDSE} has no eigenvalues (there can be no real eigenvalues due to the asymptotics of the Jost solutions, see Subsection~\ref{ssec:Jost}), it is natural to assume that the potential~$q$ is a Miura potential $q = Mu$ for some function~$u$.


\subsection{Transformation to a ZS-AKNS system}

Next we establish a relation between solutions to the energy-dependent Schr\"odinger equation~\eqref{eq:EDSE} with Miura potentials~$q=Mu$ and those for some Dirac-type equation. Namely, take any solution $y(\cdot,k)$ of~\eqref{eq:EDSE} for a nonzero~$k$ and set $y_2(\cdot,k):=y(\cdot,k)$ and
$y_1(\cdot,k):=y^{[1]}(\cdot,k)/k$, i.e.,
\begin{equation}\label{eq:y1}
    y_1(\cdot,k) = \frac{y_2'(\cdot,k) - u y_2(\cdot,k)}k;
\end{equation}
here $u$ is the Riccati representative of the potential~$q$ belonging to~$X$. Then $y_1$ and $y_2$ satisfy the system
\begin{alignat*}{5}
    y_2'\,&& & -& u y_2  &=\,&   k y_1,\\
    -y_1'\, &-& u y_1 &+& 2p y_2  &=\,&  k y_2,
\end{alignat*}
i.e., the column vector $\by(\cdot,k):=\bigl(y_1(\cdot,k),y_2(\cdot,k)\bigr)^\mathrm{t}$ satisfies the non-canonical Dirac system
\begin{equation}\label{eq:sys.y}
    \sigma_2\mathbf{y}'+P(x)\mathbf{y}=k\mathbf{y},
\end{equation}
where
\begin{equation}\label{eq:potential-P}
    \sigma_2:= \begin{pmatrix} 0&1\\ -1&0 \end{pmatrix}
        \qquad \text{and} \qquad
    P:= \begin{pmatrix} 0&-u\\ -u&2p \end{pmatrix}.
\end{equation}
The boundary condition~\eqref{eq:bc} for $\by$ read
\begin{equation}\label{eq:bcy}
    \sin \alpha\, y_1(0,k) + \cos \alpha\,  y_2(0,k) = 0.
\end{equation}

Conversely, if $\by(\cdot,k)=\bigl(y_1(\cdot,k),y_2(\cdot,k)\bigr)^\mathrm{t}$ solves the above Dirac system, then the first component $y_1$ verifies~\eqref{eq:y1}, while the second component $y:=y_2$ solves the equation
\[
    -\Bigl(\frac{d}{d x}+u\Bigr)\Bigl(\frac{d}{d x}-u\Bigr)y
            +2k p(x)y=k^2y,
\]
which coincides with the energy-dependent Schr\"{o}din\-ger equation~\eqref{eq:EDSE} with $q = Mu = u'+u^2$.

Hence in order to construct the Jost solutions for the energy-dependent
Schr\"{o}din\-ger equation~\eqref{eq:EDSE} with a Miura potential $q = B(u)$, it suffices to study the Jost solutions for the corresponding Dirac system~\eqref{eq:sys.y}. To this end we make a substitution
\[
    \by = U \bz = \begin{pmatrix} i & -i \\ 1 & 1\end{pmatrix}\bz
\]
with a vector~$\bz:=(z_1,z_2)^{\mathrm{t}}$
and after straightforward calculations recast~\eqref{eq:sys.y} as
\begin{equation}\label{eq:sys.z}
    \mathbf{z}'+Q(x)\mathbf{z}=ik\sigma_3\mathbf{z},
\end{equation}
with
\begin{equation}\label{eq:potential-Q}
    \sigma_3:= \begin{pmatrix} 1&0\\ 0&-1 \end{pmatrix},
        \qquad
    Q:= \begin{pmatrix} ip   &-u+ip\\ -u-ip&-ip \end{pmatrix};
\end{equation}
the boundary condition~\eqref{eq:bcy} then becomes
\begin{equation*}
    e^{i\alpha} z_1(0,k) + e^{-i\alpha} z_2(0,k) = 0.
\end{equation*}

We note that the system~\eqref{eq:sys.z} is still not in a canonical form, but it takes the ZS-AKNS canonical form after the substitution $\bz(x):= \exp(i \varphi(x) \sigma_3)\bw(x)$ with $\varphi(x):=\int_x^\infty p(t)\,\rd t$: indeed, the equation for
 $\bw(\cdot,k) = \bigl(w_1(\cdot,k),w_2(\cdot,k)\bigr)^\textrm{t}$
reads
\begin{equation}\label{eq:sys.w}
    \bw' + V(x) \bw = ik \sigma_3 \bw
\end{equation}
with $V(x) := \bigl(Q(x) - ip(x)\sigma_3\bigr)e^{2i \varphi(x)\sigma_3}$, i.e., with
\begin{equation}\label{eq:potential-V}
    V(x) =  \begin{pmatrix} 0 & v(x) \\
                \overline{v(x)} & 0 \end{pmatrix},
\end{equation}
and $v(x):=(-u(x)+ip(x))e^{-2i\varphi(x)}$. We observe that under the assumption made on~$p$ and $u$ the function~$v$ belongs to the space $X$.
Set~$p_0:=\varphi(0) = \int_0^\infty p(t)\,dt$ and $\beta:=\alpha+ p_0$; then $\bw$ should satisfy the boundary condition
\begin{equation}\label{eq:bcw}
    e^{i\beta}w_1(0,k) + e^{-i\beta}w_2(0,k) = 0.
\end{equation}


\subsection{Jost solutions}\label{ssec:Jost} Properties of the Jost solutions for the canonical ZS-AKNS system~\eqref{eq:sys.w} with $v \in X$ were thoroughly studied in the papers~\cite{FHMP09,HryMan13}. Namely, it was proved therein that for every real~$k\in\mathbb{R}$ the equation
\begin{equation}\label{eq:ZS-Psi}
    \Psi' + V \Psi=ik\sigma_3 \Psi
\end{equation}
with $V$ of~\eqref{eq:potential-V} and a complex-valued $v \in X$
has a unique matrix $2\times 2$ solution $\Psi$ obeying the asymptotics~$\Psi(x,k) = e^{ik x \sigma_3}(1 + o(1))$. Next, there exists a matrix $2\times 2$ kernel $\Gamma$ such that
\begin{equation}\label{eq:JS-Psi}
    \Psi(x,k)  = e^{ikx\sigma_3} + \int_0^\infty \Gamma(x,\zeta) e^{2ik\zeta}\,d\zeta;
\end{equation}
moreover, $\Gamma$ has the property that for every fixed~$x\ge0$ the function~$\Gamma(x,\cdot)$ belongs to $X \otimes M_2(\bC)$ and depends continuously therein on~$x\ge0$.

Therefore, one concludes that the corresponding non-canonical ZS-AKNS system
\begin{equation*}
    \Phi' + Q \Phi=ik\sigma_3 \Phi
\end{equation*}
with $Q$ of~\eqref{eq:potential-Q} has a unique matrix $2\times 2$ solution $\Phi$ obeying the asymptotics~$\Phi(x,k) = \re^{\ri k x \sigma_3}(1 + o(1))$; clearly, this solution satisfies $\Phi(x,k) = \exp(i\varphi(x)\sigma_3)\Psi(x,k)$.
Further,
$Y(x,k):= U \Phi(x,k)$ is a matrix-valued solution of the Dirac equation~\eqref{eq:sys.y}; its first column $\by$ obeys the asymptotics
\[
    \by(x,k) = U \exp( i\varphi(x)\sigma_3) \bpsi_1(x,k)
             = \begin{pmatrix} i  \\
                    1 \end{pmatrix} e^{i k x}(1+ o(1))
\]
as $x\to\infty$, where $\bpsi_1$ is the first column of the matrix solution~$\Psi$. The second component of~$\by=(y_1,y_2)^{\mathrm{t}}$ is the Jost solution of the energy-dependent Schr\"odinger equation~\eqref{eq:EDSE} with $q= Mu$, while $y_1 = y_2^{[1]}/k$. The integral representation of $\Psi$ leads to the following representation of $\by$:
\[
    \by(x,k) = e^{ik (x + \varphi(x))} \binom{i}{1}
        + \int_0^\infty \mathbf{g}(x,\zeta)e^{2i k \zeta}\,d\zeta,
\]
where $\mathbf{g}=(g_1,g_2)^{\mathrm{t}}$ is a function with the property that $g_j(x,\cdot)$ belongs to $X$ and depends therein continuously on~$x\ge0$.

\begin{lemma}
For every nonzero $k\in\bR$, the Jost solution $f(\cdot,k)$ of the energy-dependent Schr\"odinger equation~\eqref{eq:EDSE} with a Miura potential $q=Mu$, $u\in X$, exists and is unique.
\end{lemma}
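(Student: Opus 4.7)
The plan is to rely on the chain of transformations laid out in Subsection~2.3, which reduces the Jost problem for~\eqref{eq:EDSE} to the existence and uniqueness of the Jost matrix solution $\Psi$ of the canonical ZS--AKNS system~\eqref{eq:ZS-Psi}. Since $u,p\in X$, the coefficient $v=(-u+\ri p)\re^{-2\ri\varphi}$ also lies in $X$, so the results of~\cite{FHMP09,HryMan13} supply a unique such $\Psi(\cdot,k)$ with $\Psi(x,k)=\re^{\ri kx\sigma_3}(1+o(1))$ as $x\to\infty$, together with the integral representation~\eqref{eq:JS-Psi}.

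For existence, I would take the first column $\bpsi_1$ of $\Psi$ and reverse the substitutions by setting $\by(\cdot,k):=U\exp(\ri\varphi(\cdot)\sigma_3)\bpsi_1(\cdot,k)$, so that $\by$ solves~\eqref{eq:sys.y}; then $f:=y_2$ is a solution of~\eqref{eq:EDSE}. The representation displayed immediately before the lemma yields
\[
    f(x,k)=\re^{\ri k(x+\varphi(x))}+\int_0^\infty g_2(x,\zeta)\,\re^{2\ri k\zeta}\,\rd\zeta.
\]
Because $\varphi(x)=\int_x^\infty p(t)\,\rd t\to 0$ as $x\to\infty$ (from $p\in L^1(\bR_+)$), the leading factor equals $\re^{\ri kx}(1+o(1))$. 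The remainder is bounded in modulus by $\|g_2(x,\cdot)\|_{L^1}$, which tends to $0$ as $x\to\infty$ by the decay of the ZS--AKNS kernel $\Gamma(x,\cdot)$ in $X$ established in the cited references. These two facts together deliver the Jost normalization $f(x,k)=\re^{\ri kx}(1+o(1))$.

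For uniqueness, I would argue as follows. For real $k\ne 0$ the complex conjugate $\bar f$ is also a solution of~\eqref{eq:EDSE}, and the modified Wronskian $W(f,\bar f):=f\bar f^{[1]}-\bar f f^{[1]}$ is $x$-independent (a short direct calculation from~\eqref{eq:diff-eq-fact}) and evaluates to $-2\ri k\ne 0$ by passing to the limit $x\to\infty$. Hence $\{f,\bar f\}$ is a fundamental system, and any Jost solution $\tilde f$ takes the form $\tilde f=f+c\bar f$ for some $c\in\bC$. Multiplying by $\re^{-\ri kx}$, the Jost normalization of $\tilde f$ forces $c\re^{-2\ri kx}(1+o(1))\to 0$; since $|\re^{-2\ri kx}|=1$ does not vanish, this compels $c=0$, whence $\tilde f=f$.

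The only delicate point I would double-check in the cited papers is the decay property $\|\Gamma(x,\cdot)\|_X\to 0$ as $x\to\infty$ underlying the remainder estimate above: it is not explicit in the sentence following~\eqref{eq:JS-Psi} but is standard in the ZS--AKNS framework when $v\in L^1(\bR_+)$. Everything else is routine bookkeeping through the transformation dictionary of Subsection~2.3.
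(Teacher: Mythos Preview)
Your proposal is correct and follows essentially the same route as the paper: existence via the transformation chain back to the ZS--AKNS Jost solution~$\Psi$, and uniqueness via linear independence of $f$ and $\overline{f}$ for real $k\ne0$. Two small remarks: first, the Jost normalisation of $f$ follows directly from the defining asymptotic $\Psi(x,k)=\re^{\ri kx\sigma_3}(1+o(1))$ together with $\varphi(x)\to0$, so your worry about $\|\Gamma(x,\cdot)\|_X\to0$ is unnecessary (though that decay does hold); second, a general solution is $\tilde f=af+c\overline{f}$, and your asymptotic argument should be phrased to force both $a=1$ and $c=0$ rather than presupposing $a=1$.
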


\begin{proof}
The above considerations produce a particular Jost solution $f(\cdot,k)$ of~\eqref{eq:EDSE} in a constructive manner from the Jost solution of the companying ZS-AKNS system~\eqref{eq:ZS-Psi}.
To justify uniqueness of the Jost solution, it is enough to observe that $\overline{f(\cdot,k)}$ is another solution of~\eqref{eq:EDSE} that is linearly independent of~$f(\cdot,k)$ as it obeys a different asymptotics at~$+\infty$. Clearly, no linear combination $\overline{f(\cdot,k)} + c f(\cdot,k)$ is asymptotic to $\re^{\ri kx}$ at~$+\infty$, whence the Jost solution is unique.
\end{proof}


\section{Direct scattering problem}\label{sec:direct}


\subsection{The scattering function}

Recall that the scattering function~$S$ for the problem~\eqref{eq:EDSE}, \eqref{eq:bc} is equal to~$s(k) /\overline{s(k)}$, where the Jost function~$s$ is given by~\eqref{eq:Jost-funct}. In terms of the Jost solution $\by$ of the Dirac system~\eqref{eq:sys.y}, the Jost function reads
\[
    s(k) = k\bigl(\sin\alpha\, y_1(0,k) + \cos\alpha\, y_2(0,k)\bigr).
\]
The scattering function~$S$ then takes the form
\[
    S(k) = - \frac{\sin\alpha\,y_1(0,k) + \cos\alpha\,y_2(0,k)}%
            {\sin\alpha\,\overline{y_1(0,k)} + \cos\alpha\, \overline{y_2(0,k)}};
\]
it is uniquely defined by the requirement that the function~$\by(\cdot,k) + S(k)\overline{\by(\cdot,k)}$ is a solution of the system~\eqref{eq:sys.y} satisfying the boundary condition~\eqref{eq:bcy}.

Making the substitution $\by=U\exp(i\varphi(x)\sigma_3)\bw$, one gets the following representation of~$s$,
\[
    s(k) = k\bigl(\re^{\ri(\alpha+p_0)}w_1(0,k) + \re^{-\ri(\alpha+p_0)}w_2(0,k)\bigr),
\]
in terms of the Jost solution~$\bw=(w_1,w_2)^{\mathrm{t}}$ of the ZS-AKNS system~\eqref{eq:sys.w}; respectively, the scattering function~$S$ becomes
\begin{equation}\label{eq:S-via-w}
    S(k)
    = - \frac{\re^{\ri(\alpha+p_0)}w_1(0,k) + \re^{-\ri(\alpha+p_0)}w_2(0,k)}
           {\re^{-\ri(\alpha+p_0)}\overline{w_1(0,k)} + \re^{\ri(\alpha+p_0)}
           \overline{w_2(0,k)}}.
\end{equation}


\subsection{Scattering for ZS-AKNS systems}

In view of the results proved in~\cite{HryMan13}, the above formula for the scattering function~$S$ suggests that it is also a scattering function for some canonical ZS-AKNS system.

Indeed, consider the $2\times 2$ matrix Jost solution~$\Psi=(\psi_{jl})_{j,l=1}^2$ of the ZS-AKNS system~\eqref{eq:ZS-Psi}. The column vectors~$\bpsi_1$ and $\bpsi_2$ are linearly independent and solve equation~\eqref{eq:sys.w}; therefore, there is a linear combination of the form $\bpsi_1(\cdot,k) + c \bpsi_2(\cdot,k)$ satisfying the boundary condition~\eqref{eq:bcw}. The coefficient $c$ is equal to
\begin{equation}\label{eq:scat-psi}
    -\frac{\re^{\ri\beta}\psi_{11}(0,k)+ \re^{-\ri\beta}\psi_{21}(0,k)}
    {\re^{-\ri\beta}\overline{\psi_{11}(0,k)}+ \re^{\ri\beta}\overline{\psi_{21}(0,k)}}
\end{equation}
and is called the scattering function for the problem~\eqref{eq:sys.w}, \eqref{eq:bcw}.

It is proved in~\cite{HryMan13} that the scattering functions for canonical ZS-AKNS systems~\eqref{eq:sys.w} with potentials~\eqref{eq:potential-V} for $v\in X$ and subject to the boundary condition~\eqref{eq:bcw} belong to the following set~$\mathcal{S}$.

\begin{definition}
We say that a function $S\colon \mathbb{R}\to\mathbb{C}$ belongs to the class $\mathcal{S}$ if and only if
\begin{itemize}
\item[(1)] there are $F\in L^1(\mathbb{R})\cap L^2(\mathbb{R})$ and $\gamma\in[0,\pi)$ such that for all $k\in\mathbb{R}$ it holds
\begin{equation}\label{intrep:S}
    S(k)=\re^{2\ri\gamma}+\int_{-\infty}^\infty F(\zeta)\re^{2\ri k\zeta}\,\rd\zeta;
\end{equation}
\item[(2)] the function $S$ is unimodular, i.e. $(S(k))^{-1}=\overline{S(k)}$ for all real $k$;
\item[(3)] the winding number $W(S)$ of the function~$S$ is equal to zero.
\end{itemize}
\end{definition}

We recall that the \emph{winding number} $W(f)$ of a function $f$ that is continuous on the extended real line $\overline{\bR}$ (i.e., $f$ is continuous on~$\bR$ and possesses finite and equal limits $\lim_{k\to\pm\infty}f(k)$) is the integer number equal to
\[
    W(f):=\frac{\log f(+\infty) - \log f(-\infty)}{2\pi i}.
\]
One of the main results of~\cite{HryMan13} can be formulated as follows.

\begin{proposition}\label{pro:scat-ZS-AKNS}
The function $S$ is the scattering function of the problem
\eqref{eq:sys.w}, \eqref{eq:bcw} corresponding to some $(v,\beta)\in X\times[0,\pi)$ if and only if $S$ belongs to $\mathcal{S}$ and
the number $\gamma$ in its integral representation \eqref{intrep:S} is equal to $\beta$.
\end{proposition}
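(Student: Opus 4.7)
The plan is to prove both implications by working directly with the integral representation~\eqref{eq:JS-Psi} of the Jost matrix at $x=0$ and then invoking the Marchenko-type inverse scattering machinery from~\cite{HryMan13}.

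For the necessity direction, I would evaluate~\eqref{eq:JS-Psi} at $x=0$ to write $\psi_{11}(0,k) = 1+\int_0^\infty \Gamma_{11}(0,\zeta)\re^{2\ri k\zeta}\,\rd\zeta$ and $\psi_{21}(0,k) = \int_0^\infty \Gamma_{21}(0,\zeta)\re^{2\ri k\zeta}\,\rd\zeta$. Substituting into~\eqref{eq:scat-psi}, the numerator takes the form $\re^{\ri\beta}\bigl(1+\hat G_+(k)\bigr)$ for some $G_+\in L^1(\bR_+)\cap L^2(\bR_+)$. Next I would invoke the standard conjugation symmetries of the Jost matrix for real $k$, namely $\psi_{12}(x,k)=\overline{\psi_{21}(x,k)}$ and $\psi_{22}(x,k)=\overline{\psi_{11}(x,k)}$, which follow from the structure of $V$ via the uniqueness of the Jost solution; these force the denominator of~\eqref{eq:scat-psi} to coincide with the complex conjugate of the numerator, so that unimodularity $|S|\equiv 1$ is automatic. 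A Wiener--L\'evy argument then inverts $1+\hat G_-$ (the reflected denominator) as a constant plus the Fourier transform of an $L^1\cap L^2$ function, promoting $S$ to the affine-plus-Fourier form required by condition~(1); comparing values at $k\to\pm\infty$ identifies the constant term and so pins down $\gamma$ in terms of $\beta$. Finally, zero-freeness of the denominator on $\overline{\bC_+}$, combined with the common limits $S(\pm\infty)$, yields $W(S)=0$ by the argument principle.

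For the sufficiency direction, given $S\in\mathcal S$ with parameter $\gamma$, I would set $\beta := \gamma$ (or its convention-adjusted counterpart) and form the Marchenko reflection kernel directly from $F$ in~\eqref{intrep:S}. The hypothesis $W(S)=0$, together with unimodularity, makes the associated Marchenko integral equation uniquely solvable in $L^1\cap L^2$ on each half-line $(x,\infty)$; this produces a kernel $\Gamma(x,\cdot)$ whose off-diagonal entry at the diagonal $\zeta=0$ reconstructs a potential $v\in X$. A direct-scattering check, built on the uniqueness of the Jost solution and on the density of $C_0^\infty$ in $X$, then confirms that the ZS-AKNS system with this $v$ and boundary angle $\beta$ indeed has $S$ as its scattering function.

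The principal obstacle I anticipate, common to both directions, is the Wiener--L\'evy factorisation of the Jost denominator in the Wiener algebra on the line: on the direct side it converts the rational expression~\eqref{eq:scat-psi} into the affine-plus-Fourier representation~\eqref{intrep:S} and links the spectral condition ``no bound states'' with $W(S)=0$, while on the inverse side the same factorisation underlies the bounded invertibility of the Toeplitz operator associated with the Marchenko equation. The supporting input on the direct side is zero-freeness of the Jost function in $\overline{\bC_+}$, which in the present Miura setting is furnished by the spectral analysis of Subsection~\ref{ssec:spectral}.
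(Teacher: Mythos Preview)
The paper does not supply a proof of Proposition~\ref{pro:scat-ZS-AKNS}: it is quoted as one of the main results of~\cite{HryMan13} and used here as a black box, with the Marchenko-equation reconstruction algorithm summarised (again without proof) at the very end of Section~\ref{sec:inverse}. So there is no in-paper argument to compare your proposal against.

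That said, your outline is a reasonable sketch of how the proof in~\cite{HryMan13} actually runs: the integral representation~\eqref{eq:JS-Psi} at $x=0$, the conjugation symmetry of the Jost matrix coming from the structure of~$V$, Wiener--L\'evy inversion in the Wiener algebra to pass from the quotient~\eqref{eq:scat-psi} to the form~\eqref{intrep:S}, and Marchenko-type solvability for the converse. One point to flag: your final paragraph appeals to Subsection~\ref{ssec:spectral} for zero-freeness of the Jost denominator, but that subsection concerns the energy-dependent Schr\"odinger problem, whereas Proposition~\ref{pro:scat-ZS-AKNS} is stated for \emph{arbitrary} $v\in X$ in the ZS-AKNS system. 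The result in~\cite{HryMan13} establishes zero-freeness directly for ZS-AKNS (via a positivity/Wronskian argument intrinsic to that system), and the present paper is organised so that the Schr\"odinger spectral analysis is a \emph{consequence} of, not an input to, the ZS-AKNS theory. Routing the argument through Subsection~\ref{ssec:spectral} would require first knowing that every $v\in X$ arises from some $(u,p)$ via the transformation of Section~\ref{sec:prelim}---which is true (it is Lemma~2 and~\eqref{eq:u-p-solution}) but is proved later in the paper, so you would be borrowing forward.
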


Moreover, the paper~\cite{HryMan13} gives an algorithm based on the Marchenko equation that for a given function $S \in \mathcal{S}$ constructs a ZS-AKNS system (i.e., the function~$v$ in the potential~$Q$ and the number~$\beta$ in the boundary condition) whose scattering function is this~$S$.


\section{Inverse scattering for energy-dependent Schr\"odinger equations}\label{sec:inverse}

The representation~\eqref{eq:S-via-w} of the scattering function for the energy-dependent Schr\"odinger equation~\eqref{eq:EDSE} in terms of the Jost solution~$\bw$ of the companying canonical ZS-AKNS system~\eqref{eq:sys.w} reveals a close connection to the scattering functions of the latter.

We para\-me\-t\-rise the set of problems \eqref{eq:EDSE}, \eqref{eq:bc} by the triples $(u,p,\alpha)\in X\times X\times[0,\pi)$ of Riccati representatives $u$ of the potential~$q$, the potential~$p$, and the constant $\alpha$ in the boundary conditions.
Our main result gives a complete description of the scattering functions for equations~\eqref{eq:EDSE} subject to the boundary conditions~\eqref{eq:bc}:

\begin{theorem}\label{thm:main}
A function $S$ is the scattering function of the problem
\eqref{eq:EDSE}, \eqref{eq:bc} corresponding to some $(u,p,\alpha)\in X\times X\times[0,\pi)$ if and only if $S$ belongs to $\mathcal{S}$ and the number $\gamma$ in its integral representation \eqref{intrep:S} is equal to $\gamma_0:=(\alpha+p_0) (\hskip-6pt\mod\pi)$.
\end{theorem}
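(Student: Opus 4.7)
The plan is to deduce Theorem~\ref{thm:main} from Proposition~\ref{pro:scat-ZS-AKNS} via the dictionary established in Section~\ref{sec:prelim}: formula~\eqref{eq:S-via-w} says that the scattering function~$S$ of the energy-dependent Schr\"odinger problem~\eqref{eq:EDSE},~\eqref{eq:bc} is simultaneously the scattering function of the canonical ZS-AKNS system~\eqref{eq:sys.w},~\eqref{eq:bcw} with potential $v(x) = (-u(x)+\ri p(x))\re^{-2\ri\varphi(x)}$ and boundary parameter $\beta := (\alpha+p_0)\bmod\pi\in[0,\pi)$, where $\varphi(x)=\int_x^\infty p(t)\,\rd t$ and $p_0=\varphi(0)$. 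Notice that $|v|=(u^2+p^2)^{1/2}$, so $v\in X$ whenever $u,p\in X$, and that the boundary condition~\eqref{eq:bcw} is invariant under $\beta\mapsto\beta+\pi$, which justifies reducing modulo~$\pi$.

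\emph{Necessity} is then immediate: given $(u,p,\alpha)\in X\times X\times[0,\pi)$, the associated pair $(v,\beta)$ lies in $X\times[0,\pi)$, and Proposition~\ref{pro:scat-ZS-AKNS} yields $S\in\mathcal{S}$ with $\gamma=\beta=\gamma_0$.

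\emph{Sufficiency} is the substantive direction. Given $S\in\mathcal{S}$ with constant $\gamma\in[0,\pi)$, Proposition~\ref{pro:scat-ZS-AKNS} and the reconstruction algorithm of~\cite{HryMan13} produce a pair $(v,\beta)\in X\times[0,\pi)$ with $\beta=\gamma$ whose ZS-AKNS scattering function is $S$. It remains to disentangle $v$ into a triple $(u,p,\alpha)$. Taking real and imaginary parts of the identity $v\,\re^{2\ri\varphi}=-u+\ri p$ motivates the definitions $u:=-\myRe(v\,\re^{2\ri\varphi})$ and $p:=\myIm(v\,\re^{2\ri\varphi})$; combined with $\varphi(x)=\int_x^\infty p(t)\,\rd t$, this turns the problem into the fixed-point equation
\[
    \varphi(x) = \int_x^\infty \myIm\bigl[v(t)\,\re^{2\ri\varphi(t)}\bigr]\,\rd t, \qquad x\ge 0,
\]
for a bounded continuous real-valued function $\varphi$ vanishing at infinity.

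The \emph{main obstacle} is solving this nonlinear fixed-point equation globally. I would proceed in two stages. First, pick $x_0\ge 0$ so large that $2\int_{x_0}^\infty|v(t)|\,\rd t<1$; the right-hand side then defines a contraction on the Banach space $C_b([x_0,\infty);\bR)$ (with Lipschitz constant bounded by $2\int_{x_0}^\infty|v|$), which produces a unique solution $\varphi$ on $[x_0,\infty)$ with $\varphi(\infty)=0$. Second, extend $\varphi$ to $[0,x_0]$ by integrating the ODE $\varphi'(x)=-\myIm[v(x)\,\re^{2\ri\varphi(x)}]$ backwards from $x_0$; the right-hand side is measurable in $x$ and Lipschitz in $\varphi$ with Lipschitz constant $2|v(x)|\in L^1([0,x_0])$, so Carath\'eodory's theorem supplies a unique extension on $[0,x_0]$. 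The resulting $u,p$ satisfy $|u|,|p|\le|v|$ and hence lie in $X$; direct substitution verifies $v=(-u+\ri p)\re^{-2\ri\varphi}$ and $p_0=\varphi(0)$. Setting $\alpha:=(\beta-p_0)\bmod\pi\in[0,\pi)$ gives $(\alpha+p_0)\bmod\pi=\beta=\gamma=\gamma_0$, and by the necessity direction applied to this $(u,p,\alpha)$ the Schr\"odinger scattering function is precisely~$S$, which completes the proof.
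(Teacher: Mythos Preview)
Your argument is correct and matches the paper's approach for both directions of the ``if and only if'': necessity via Proposition~\ref{pro:scat-ZS-AKNS} and~\eqref{eq:S-via-w}, and existence by solving the nonlinear equation for~$\varphi$ first as a contraction on a tail $[x_0,\infty)$ and then by backward extension to $[0,x_0]$. The paper proceeds identically (its Lemma uses the Banach space of $AC$-functions with norm $\int|f'|$ rather than $C_b$, but the fixed-point mechanism is the same).

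One point worth flagging: the paper's proof of Theorem~\ref{thm:main} is split into \emph{three} parts---necessity, uniqueness, and existence---and a substantial portion of it is devoted to uniqueness, i.e., injectivity of the map $(u,p,\alpha)\mapsto S$, which you do not address. Since Proposition~\ref{pro:scat-ZS-AKNS} only delivers uniqueness at the level of $(v,\beta)$, the paper separately shows that two triples $(u_j,p_j,\alpha_j)$ giving the same $(v,\beta)$ must coincide, via an ODE argument for $\theta=\varphi_2-\varphi_1$. Strictly speaking the ``if and only if'' as stated does not require this, but the paper treats it as an integral part of the theorem, so you may wish to include it.
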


The proof of this theorem can naturally be split into three parts: necessity (i.e., characterization of the scattering functions), uniqueness (different energy-dependent Schr\"odinger equations lead to different scattering functions), and existence (every function in~$\mathcal{S}$ is the scattering function for some problem under consideration).

Also, as shown in Section~\ref{sec:prelim}, the energy-dependent Schr\"odinger equations~\eqref{eq:EDSE} are in one-to-one correspondence with the related Dirac systems~\eqref{eq:sys.y} in non-canonical form. Therefore one can study non-canonical Dirac systems~\eqref{eq:sys.y} instead of~\eqref{eq:EDSE}; the corresponding boundary conditions are given by~\eqref{eq:bcy}.

\begin{proof}[Proof of necessity]
On account of the representation~\eqref{eq:S-via-w} and Proposition~\ref{pro:scat-ZS-AKNS} we conclude that the scattering function~$S$ of every non-canonical Dirac system~\eqref{eq:sys.y}, \eqref{eq:bcy}  corresponding to some $(u,p,\alpha)\in X\times X\times[0,\pi)$ belongs to~$\mathcal{S}$.
\end{proof}

\begin{proof}[Proof of uniqueness]
Assume that there exist two Dirac-type problems~\eqref{eq:sys.y} with potentials $P_j$, $j=1,2$, of the form~\eqref{eq:potential-P}, i.e., with
\[
    P_j:= \begin{pmatrix} 0&-u_j\\ -u_j&2p_j \end{pmatrix},
        \qquad u_j,p_j \in X,
        \qquad j = 1,2,
\]
and subject to the boundary condition~\eqref{eq:bcy} with the respective $\alpha_j\in[0,\pi)$ instead of~$\alpha$, sharing the same scattering function~$S$. We shall prove that $P_1 = P_2$ and $\alpha_1 = \alpha_2$.

Indeed, as explained in the previous section, the scattering problem for every Dirac-type system~\eqref{eq:sys.y} can be transformed to the scattering problem for a canonical ZS-AKNS system~\eqref{eq:sys.w}; moreover, the scattering functions for both systems will be the same.

Set~$\varphi_j(x):=\int_x^\infty p_j(t)\,dt$ and $v_j:= (-u_j + i p_j)e^{-2i\varphi_j}$, $j=1,2$.
Then the above transformation gives two canonical ZS-AKNS systems~\eqref{eq:sys.w} with potentials~$V_j$, $j=1,2$, of the form~\eqref{eq:potential-V}, in which $v$ is replaced by~$v_j$; the corresponding boundary conditions have the form~\eqref{eq:bcw} with some (plausibly different) $\beta_1$ and $\beta_2$ from the interval~$[0,\pi)$.
Since the scattering functions for these ZS-AKNS systems are the same, Proposition~\ref{pro:scat-ZS-AKNS} yields the equality $v_1 = v_2$ and $\beta_1 = \beta_2$. In particular,
\begin{equation}\label{eq:r}
    |-u_1(x) + i p_1(x)| = |-u_2(x) + i p_2(x)| =:r(x),
\end{equation}
and there exist functions~$\eta_j(x)$ such that
\[
    -u_j(x) + i p_j(x) = r(x) e^{i\eta_j(x)}, \qquad j=1,2.
\]

Observe that the equality $v_1 = v_2$ yields the relation
$\eta_1 - 2\varphi_1 = \eta_2 - 2\varphi_2 (\hskip-6pt\mod 2\pi)$, so that
$\eta_2 - \eta_1 = 2(\varphi_2 - \varphi_1)(\hskip-6pt\mod 2\pi)$. Since $p_j = r \sin \eta_j$, we get
\[
    p_2 - p_1 = r (\sin\eta_2 - \sin\eta_1)
              = 2r \sin (\varphi_2 - \varphi_1) \cos (\varphi_2 - \varphi_1 + \eta_1).
\]
Next we set $\theta:=\varphi_2 - \varphi_1$ and recall that $p_j = - \varphi'_j$ to get the following equation for~$\theta$ on $\bR_+$:
\begin{equation}\label{eq:theta}
    \theta'=- 2r\sin \theta \cos(\theta + \eta_1).
\end{equation}
By the definition of $\varphi_j$, the function $\theta$ vanishes at~$+\infty$, i.e.,
\begin{equation}\label{bc:theta}
\theta(x)\to0,\qquad x\to\infty.
\end{equation}

Now we claim that the problem \eqref{eq:theta}--\eqref{bc:theta} possesses only trivial solution $\theta\equiv0$. Observe that $\theta$ solves the integral equation
\[
    \theta(x)= 2\int_x^\infty r(t)\sin\theta(t)\cos(\theta(t)+ \eta_1(t))\,d t;
\]
thus, we get the bound
\begin{equation}\label{ineq:theta}
    |\theta(x)|\le 2\int^\infty_x |r(t)||\theta(t)|\,dt
\end{equation}
holding for all positive $x$.
Introduce the least monotonically decreasing majorant of the function $\theta$:
\[
    \Theta(x):=\max\limits_{t\geq x}|\theta(t)|.
\]
By virtue of \eqref{ineq:theta} we derive the estimate
\[
    \Theta(x) \le 2\int_x^\infty |r(t)||\theta(t)|\,\rd t
              \le 2\Theta(x)\int^\infty_x |r(t)|\,\rd t.
\]
Due to the definition of $r$ (see~\eqref{eq:r}) and the inclusions $u_j, p_j \in X$, $r$ is integrable, and thus there is an $x_0\ge0$ such that
\[
    \int_{x_0}^\infty |r(t)|\,\rd t < \frac14;
\]
then we find that $\Theta(x_0)=0$ and so $\theta$ is zero for $x\ge x_0$.

Next we show that $\theta$ is zero on $\bR_+$.
Assume the contrary and denote by $x^*$ the greatest lower bound of all $x$ such that $\theta=0$ a.e.\ on $(x,\infty)$. Then $x^*>0$ by assumption, and
since the function $r$ is integrable, there exists $x_*\in(0,x^*)$ such that
\[
    \int_{x_*}^{x^*} |r(t)|\,\rd t\leq\frac14.
\]
Thus
\[
    \Theta(x_*)
        \leq2\Theta(x_*)\int_{x_*}^{x^*} |r(t)|\,\rd t
        \leq\frac12\Theta(x_*),
\]
which implies that $\Theta$ vanishes a.e.\ on $(x_*,x^*)$. This contradicts the definition of~$x^*$ and the assumption that~$x^*>0$; therefore $x^*=0$ and $\Theta$ vanishes on $\mathbb{R}_+$.

As a result, we get that $\theta$ is zero on $\mathbb{R}_+$, i.e., that $\varphi_1= \varphi_2$ on $\bR_+$, whence $u_1=u_2$ and $p_1=p_2$ on~$\bR_+$ by virtue of the relation $v_1=v_2$.
Now the formula~\eqref{eq:S-via-w} shows that $\alpha_1 = \alpha_2 (\hskip-6pt\mod\pi)$, and
the proof is complete.
\end{proof}

\begin{proof}[Proof of existence]
Given an arbitrary~$S$ in $\mathcal{S}$, we use Proposition~1 and find a unique canonical ZS-AKNS system~\eqref{eq:sys.w} and the number~$\beta$ in the boundary conditions~\eqref{eq:bcw} having this~$S$ as its scattering function. The potential~$V$ of this ZS-AKNS system has the form~\eqref{eq:potential-V} with some complex-valued function~$v \in X$.

Our task is to represent this function~$v$ as $(-u+i p)e^{-2i\varphi}$ for some functions~$u$ and $p$ from~$X$ and with $\varphi(x):=\int_x^\infty p(t)\,dt$.
In other words,
\[
    -u(x) + i p(x) = v(x)e^{2i\varphi(x)},
\]
and, equating the imaginary parts and recalling that $p = -\varphi'$, we get the differential equation
\begin{equation}\label{eq:de-gamma}
    \varphi'=-(\myRe  v)\sin2\varphi - (\myIm v)\cos2\varphi
\end{equation}
for the function~$\varphi$. By the definition of~$\varphi$, it should vanish at $+\infty$.

First we prove that the above equation has a unique solution vanishing at~$+\infty$.

\begin{lemma}
Suppose that $v_1$ and $v_2$ are real-valued functions in $L^1(\mathbb{R}_+)$.
Then the equation
\begin{equation}\label{eq:de-phi}
    \phi'= - v_1\sin2\phi - v_2\cos2\phi
\end{equation}
admits a unique real-valued solution $\phi$ on~$\bR_+$ tending to zero as $x$ tends to infinity.
\end{lemma}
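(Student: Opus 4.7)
The plan is to recast \eqref{eq:de-phi} as an integral equation on a tail $[X,\infty)$: formally integrating from $x$ to $\infty$ and imposing the required behaviour $\phi(x)\to 0$ at infinity yields
\[
    \phi(x) = \int_x^\infty \bigl[v_1(t)\sin 2\phi(t) + v_2(t)\cos 2\phi(t)\bigr]\,\rd t.
\]
I would then apply the Banach fixed-point theorem to the associated operator $T\phi$ acting on the space $C_b([X,\infty))$ of bounded continuous real-valued functions with the supremum norm, exploiting the fact that the right-hand side of~\eqref{eq:de-phi} is globally Lipschitz in $\phi$ with Lipschitz constant bounded by $2(|v_1|+|v_2|)\in L^1(\bR_+)$.

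Concretely, I would choose $X\ge0$ so large that $2\int_X^\infty(|v_1(t)|+|v_2(t)|)\,\rd t<\tfrac12$; then the direct estimate based on $|\sin 2a-\sin 2b|,|\cos 2a-\cos 2b|\le 2|a-b|$ shows that $T$ is a strict contraction on $C_b([X,\infty))$ with contraction constant at most $1/2$, while dominated convergence gives $T\phi(x)\to 0$ as $x\to\infty$. The resulting unique fixed point produces the desired solution on $[X,\infty)$ that vanishes at infinity, which I would then extend backwards to $[0,X]$ by solving the Cauchy problem for~\eqref{eq:de-phi} with the already-determined initial value $\phi(X)$. The right-hand side of~\eqref{eq:de-phi} satisfies the Carath\'eodory conditions and is globally Lipschitz in $\phi$ with an $L^1_{\mathrm{loc}}$ Lipschitz constant, while the bound $|\phi'|\le |v_1|+|v_2|$ prevents blow-up; hence the extension exists, is unique, and is defined on all of $[0,X]$.

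For uniqueness on $\bR_+$, any two real-valued solutions of~\eqref{eq:de-phi} that vanish at infinity both satisfy the above integral equation on $[X,\infty)$, so the contraction estimate forces them to coincide there; standard backwards uniqueness for the Cauchy problem (a Gronwall argument using the Lipschitz majorant $2(|v_1|+|v_2|)$) then propagates the equality down to the origin. The main obstacle is not conceptual but quantitative: one needs the contraction constant to be uniformly less than one on a tail, and this is precisely where the assumption $v_1,v_2\in L^1(\bR_+)$ enters in an essential way, since it allows $\int_X^\infty(|v_1|+|v_2|)\,\rd t$ to be made arbitrarily small by taking $X$ large.
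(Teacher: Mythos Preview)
Your proposal is correct and follows essentially the same route as the paper: both convert the ODE into the integral equation $\phi(x)=\int_x^\infty(v_1\sin2\phi+v_2\cos2\phi)\,\rd t$ on a tail where $\int_X^\infty(|v_1|+|v_2|)$ is small, apply the Banach fixed-point theorem via the Lipschitz bounds $|\sin2a-\sin2b|,|\cos2a-\cos2b|\le2|a-b|$, and then extend backwards by standard Carath\'eodory existence--uniqueness. The only cosmetic difference is that the paper carries out the contraction in the space of absolutely continuous functions vanishing at infinity equipped with the norm $\|f\|=\int_{x_0}^\infty|f'|$, while you use the sup norm on $C_b([X,\infty))$; both choices work for the same reason.
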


\begin{proof}
Fix a number $x_0\ge0$ such that
\[
    \int_{x_0}^\infty \bigl(|v_1(t)| + |v_2(t)|\bigr)\,dt < \frac14
\]
and consider the function space
\[
    Y=\Big\{f\in AC(x_0,\infty)\mid \forall x\ge x_0,\
            \overline{f(x)} = f(x),\ \lim_{x\to\infty}f(x) =0 \Big\}
\]
with the norm
\[
    \|f\|_Y :=\int_{x_0}^\infty|f'(t)|\,dt.
\]
We claim that the space $Y$ is a Banach space over the field~$\bR$. Indeed, if $(f_n)_{n\in\bN}$ is a Cauchy sequence in~$Y$, then the sequence of the derivatives $(f_n')$ is Cauchy in~$L^1(x_0,\infty)$ and thus converges therein to some real-valued function~$f_0 \in L^1(x_0,\infty)$; now
\[
    f(x):= - \int_x^\infty f_0(t)\,dt
\]
is the limit of~$(f_n)$ in~$Y$.

In the Banach space $Y$, we introduce the nonlinear integral operator
\[
    \big(Tf\big)(x):=
            \int_x^\infty \big(u_1(t)\sin2f(t)+u_2(t)\cos2f(t)\big)\,dt.
\]
Observe that if $\phi$ solves~\eqref{eq:de-phi}, then $\phi'$ is integrable over~$\bR_+$, and upon integrating both sides of~\eqref{eq:de-phi} we see that $\phi$ is a fixed point of the operator $T$. Conversely, a fixed point of~$T$ is also a solution of~\eqref{eq:de-phi} for $x\ge x_0$ vanishing at $+\infty$. We therefore look for fixed points of the mapping~$T$.

Clearly, $T$ maps~$Y$ into~$Y$. In view of the inequalities
\begin{align*}
    |\sin2f_1(t)-\sin2f_2(t)|, |\cos2f_1(t)-\cos2f_2(t)|
            \leq 2|f_1(t) -f_2(t)| \leq 2\|f_1-f_2\|_Y,
\end{align*}
we conclude that
\begin{equation*}
    \|Tf_1-Tf_2\|_Y
        \leq 2\|f_1-f_2\|_Y
            \int_{x_0}^\infty\big(|u_1(t)|+|u_2(t)|\big)\,\rd t
        \leq \frac12\|f_1-f_2\|_Y
\end{equation*}
so that $T$ is a contraction in~$Y$. Next we find that
\[
    \|T0\|_Y= \int_{x_0}^\infty |u_2(t)|\,\rd t \le \frac14,
\]
and if $f$ is an arbitrary element of the closed unit ball in $Y$, then
\[
    \|Tf\|_Y \leq \|T0\|_Y + \|Tf-T0\|_Y
             \leq \frac14+\frac12\|f\|_Y
             \leq\frac34.
\]

Thus $T$ is a contraction in~$Y$ that maps its closed unit ball into itself and therefore by the Banach fixed point theorem there is a unique $\phi$ in the unit ball of~$Y$ such that $\phi = T\phi$. This fixed point $\phi$ solves equation~\eqref{eq:de-phi} on $(x_0,\infty)$. By the standard existence and uniqueness theorems, this $\phi$ can be extended to a solution of~\eqref{eq:de-phi} over the whole half-line~$\bR_+$. The proof is complete.
\end{proof}

In view of the above lemma, equation~\eqref{eq:de-gamma} has a solution~$\varphi$ vanishing at infinity. Set
\begin{equation}\label{eq:u-p-solution}
\begin{aligned}
    u&:= - (\myRe v) \cos2\varphi + (\myIm v)\sin2\varphi,\\
    p&:=     (\myRe  v)\sin2\varphi + (\myIm v)\cos2\varphi;
\end{aligned}
\end{equation}
then $u$ and $p$ are real-valued, belong to~$X$ and, moreover, satisfy the relation
$v = (-u+ip)e^{-2i\varphi}$. Finally, we define the number $\alpha\in[0,\pi)$ from the equality $\alpha + \varphi(0)=\beta(\hskip-6pt\mod\pi)$.
As shown in Section~\ref{sec:direct}, the scattering function of~\eqref{eq:scat-psi} for the ZS-AKNS system constructed at the beginning of this proof is also a scattering function for the non-canonical Dirac system~\eqref{eq:sys.y} with potential~$P$ of~\eqref{eq:potential-P} with the $u$ and $p$ of~\eqref{eq:u-p-solution} and with the number $\alpha$ in the boundary condition defined as $\beta-\varphi(0)$ modulo~$\pi$. Therefore the scattering function of the so constructed non-canonical Dirac system is the function $S\in\mathcal{S}$ we have started with. The proof is complete.
\end{proof}

Analyzing the proof of the above theorem, we can suggest an explicit algorithm reconstructing the triple $(u,p,\alpha)$ from $S$.
Namely, given an arbitrary element $S$ of $\mathcal{S}$, we construct the energy-dependent Schr\"{o}dinger equation \eqref{eq:EDSE} with a Riccati representative $u\in X$ of the potential~$q$, the potential~$p\in X$, and the constant $\alpha\in[0,\pi)$ in the boundary conditions \eqref{eq:bc} in the following way:
\begin{itemize}
    \item[(1)] construct the ZS-AKNS system \eqref{eq:sys.w}, \eqref{eq:bcw} with a potential $v\in X$ and a constant $\beta\in[0,\pi)$ whose scattering function coincides with $S$ (see~\cite{HryMan13}):

  \item [] recover $\beta$ and $F$ from the integral representation \eqref{intrep:S},
form the matrix-valued function
\begin{equation*}
    \Omega(x):= \begin{pmatrix} 0&\overline{F(x)}\\ F(x)&0 \end{pmatrix}
\end{equation*}
and consider the corresponding {\it Marchenko equation}
\begin{equation*}
    \Gamma(x,\zeta) +\Omega(x+\zeta)
        + \int_0^\infty \Gamma(x,t)\Omega(x+t+\zeta)\,dt=0
\quad\text{for a.e.}\quad \zeta>0
\end{equation*}
for the kernel~$\Gamma=(\Gamma_{jk})_{j,k=1}^2$ of the Jost solution \eqref{eq:JS-Psi}; this Marchenko equation is uniquely soluble and gives the potential $v$ by \[v(x)=-\Gamma_{12}(x,0)=-\overline{\Gamma_{21}(x,0)};\]

\item [(2)] having $v$, find a unique solution $\varphi$ of the equation \eqref{eq:de-gamma} vanishing at infinity;

\item [(3)] define the potentials $u\in X$ and $p\in X$ via formulas \eqref{eq:u-p-solution}, and the constant $\alpha$ in the boundary condition as $\beta-\varphi(0)$ modulo~$\pi$.

\end{itemize}


\noindent
{\it Acknowledgements}
The authors thank anonymous referees for useful remarks that have led to the improvement of the manuscript.
The research of~R.~H. was partially supported by the Centre for Innovation
and Transfer of Natural Sciences and Engineering Knowledge at the University
of Rzesz\'ow.
The research of~S.~M. was supported by  the European Union within the project ``Support for research teams on CTU'' CZ.1.07/2.3.00/30.0034.



\end{document}